\theoremstyle{definition}
\newtheorem{dfn}{Definition}[section]
\newtheorem{thm}{Theorem}[section]
\newtheorem{lem}{Lemma}[section]
\newtheorem{prop}{Proposition}[section]
\newtheorem{cor}{Corollary}[section]
\newtheorem*{main*}{Main result}
\theoremstyle{definition}
\newtheorem{case}{Example}[section]
\newtheorem*{prop*}{proposition}
\numberwithin{equation}{section}
\newtheorem{rem}{Remark}[section]
\theoremstyle{plain}
\renewcommand{\AB@affilsep}{\quad\protect\Affilfont}
\let\AB@affilsepx\AB@affilsep % 所属間の改行を無効化
\begin{document}
\title{Special values of $K$-theoretic Schur $P$- and $Q$-functions}
\author{Takahiko Nobukawa and Tatsushi Shimazaki}
\date{}
\maketitle

\begin{abstract}
We provide the special values of the skew version of the $K$-theoretic Schur $P$- and $Q$-functions. Using these special values, we show an oddness property of the number of shifted set-valued skew tableaux. Additionally, we generalize these special values to another skew case. 
Based on these special values, we give pairs among certain shifted set-valued skew tableaux.
\end{abstract}
\renewcommand{\thefootnote}{\fnsymbol{footnote}}
\footnote[0]{MSC 2020: 05E05, 05A17, 11P81.}
\footnote[0]{\noindent Keywords: shifted set-valued tableau, $K$-theoretic Schur $P$- and $Q$-functions, Grothendieck polynomial.}
\renewcommand{\thefootnote}{\arabic{footnote}}

\section{Introduction}\label{section1}

The Schur polynomial is the character of finite-dimensional irreducible representations of the general linear group. 
As is well known, the Schur polynomial can be represented in terms of semistandard Young tableaux.
Throughout this paper, we use the term semistandard tableaux as an abbreviation for semistandard Young tableaux.
The Grothendieck polynomial is known as a $K$-theoretic analogue of the Schur polynomial. 
The set-valued semistandard tableaux, which are generalizations of semistandard tableaux, were introduced by Buch~\cite{Buc02} to represent the (stable) Grothendieck polynomial. The Grothendieck polynomial was introduced by Lascoux and Schützenberger~\cite{Las90,LS82} to serve as representatives for the structure sheaves associated with Schubert varieties in flag varieties.

In addition to the Schur polynomial, the Schur $P$- and $Q$-functions appear in the projective representations of the symmetric group~\cite{Sch11}. 
These functions can be expressed as polynomials using shifted semistandard tableaux. 
Schur $P$-functions can be expressed as scalar multiples of Schur $Q$-functions (see~\eqref{P=Q}).
Ivanov~\cite{Iva01} introduced a multiparameter deformation of Schur $Q$-functions and proved several combinatorial formulas analogous to those for the original Schur $Q$-functions.

Ikeda and Naruse \cite{IN13} introduced a $K$-theoretic analogue of Ivanov's functions, called the $K$-theoretic factorial Schur $P$- and $Q$-functions. 
They proved that these functions correspond to the structure sheaves of Schubert varieties in the torus-equivariant $K$-theory of maximal isotropic Grassmannians of symplectic or orthogonal types. 
Since the Grothendieck polynomial is a $K$-theoretic analogue of the Schur function, these $K$-theoretic Schur $P$- and $Q$-functions can be regarded as analogues in the shifted setting. 
In this paper, we adopt the definition of the non-factorial $K$-theoretic Schur $P$- and $Q$-functions using shifted set-valued semistandard tableaux. 
Our main result is as follows:
\begin{main*}[Theorem~\ref{skewspecialvalue}]
Let $\lambda$ and $\mu$ be strict partitions such that $\lambda \supset \mu$.
We have
\begin{align*}
GP_{\lambda/\mu}(\beta,\dots,\beta \mid -\beta^{-1}) &= \beta^{|\lambda/\mu|}, \\
GQ_{\lambda/\mu}(\beta,\dots,\beta \mid -\beta^{-1}) &= \beta^{|\lambda/\mu|}.
\end{align*}
\end{main*}
As a corollary of this result, we show that the number of shifted set-valued skew tableaux is always odd whenever it is non-zero.
Analogous results for the Grothendieck polynomial, its skew version, and the number of ordinary semistandard skew tableaux were given in our previous research~\cite{FNS23}.

The rest of this paper is organized as follows. In Section~\ref{section2}, we introduce the shifted set-valued tableaux and the $K$-theoretic Schur $P$- and $Q$-functions. In Section~\ref{section3}, we show the main result. In Section~\ref{section4}, we extend the evaluations in Section~\ref{section3} to another skew version.
A summary and discussion of future work are presented in Section~\ref{section5}.

\section{Preliminaries}\label{section2}

In this section, we give the precise definitions of the shifted set-valued tableaux and the non-factorial $K$-theoretic Schur $P$- and $Q$-functions.
 Throughout this paper, we deal with the non-factorial version, i.e., the case in which all the extra parameters other than the symmetric variables are set to zero.

\subsection{Strict partitions and shifted Young diagrams}

A \emph{strict partition} is a sequence of positive integers
\begin{align*}
\lambda = (\lambda_1, \lambda_2, \dots, \lambda_r)
\end{align*}
such that $\lambda_1 > \lambda_2 > \cdots > \lambda_r > 0$.
Let $\mathcal{SP}$ denote the set of all strict partitions of positive integers, and let $\mathcal{SP}_l$ represent the set of strict partitions of a positive integer $l$. 
The \emph{length} $\ell(\lambda)$ of $\lambda$ is the largest positive integer $r$ such that $\lambda_r > 0$.
A strict partition in $\mathcal{SP}_l$ consists of $\ell(\lambda)$ distinct positive integers $\lambda_i$ such that their sum satisfies:
\begin{align*}
|\lambda| \coloneqq \sum_{i=1}^{r} \lambda_i = l.
\end{align*}
We can naturally identify a strict partition with a \emph{shifted Young diagram},
in which the $i$-th row is shifted to the right by $i-1$ positions and consists of $\lambda_i$ boxes.
Given a strict partition $\lambda$, the \textit{shifted Young diagram of shape} $\lambda$ is defined as follows:
\begin{align*}
{\rm SYD}_\lambda \coloneqq \{ (i,j) \in (\mathbb{Z}_{> 0})^2 \mid 1\leq i \leq \ell(\lambda),\ i \leq j \leq \lambda_i+i-1 \}.
\end{align*}
We denote the shifted Young diagram of shape $\lambda$ as the arrangement of $l$ boxes into $\ell(\lambda)$ rows, where the $i$-th row contains $\lambda_i$ boxes.
We describe the boxes of a shifted Young diagram using the matrix representation in the same manner, i.e., the first coordinate $i$ increases downward, and the second coordinate $j$ increases from left to right.
For example, the shifted Young diagram corresponding to the strict partition $\lambda = (4, 2, 1) \in \mathcal{SP}_7$ is obtained below:\vspace{3mm}
\begin{align*}
{\raisebox{-5.5pt}[0pt][0pt]{$\lambda =\ $}} {\raisebox{-0pt}[0pt][0pt]{\ytableaushort{\ \ \ \ ,\none\ \ ,\none\none\ }}}.\\[5mm]
\end{align*}

\subsection{Shifted semistandard tableaux and Schur $P$- and $Q$-functions}
We put $[n] \coloneqq \{ 1,2,\dots,n \}$.
For a positive integer $k \in [n]$, let $k' \coloneqq k - \frac{1}{2}$.
We denote $[n'] \coloneqq \{ 1', 2', \dots, n' \}$ and $[n', n] \coloneqq \{ 1', 1, 2', 2, \cdots, n', n \}$.

\begin{dfn}\label{sht}
A \textit{shifted semistandard tableau of shape $\lambda$} is a map $T : {\rm SYD}_\lambda \rightarrow 2^{[n',n]}$ satisfying the following conditions:
\begin{align*}
(1)&\ |T_{i,j}|= 1,\quad T_{i,j} \leq T_{i,j+1},\quad T_{i,j} \leq T_{i+1,j};\\
(2)&\ k \in [n]\ \text{appears at most once in each column};\\
(3)&\ k' \in [n']\ \text{appears at most once in each row};\\
(4)&\ T_{i,i} \subseteq [n].
\end{align*}
\end{dfn}
We call shifted semistandard tableaux simply shifted tableaux.
We write the set of all shifted tableaux of shape $\lambda$ as ${\rm ShT}_P(\lambda)$.
To indicate the number of variables $n$, we use the notation ${\rm ShT}_P(\lambda,n)$. 
Furthermore, the set of all shifted tableaux of shape $\lambda$ in $n$ variables, excluding the fourth condition (4), is denoted by ${\rm ShT}_Q(\lambda,n)$.
We only consider situations where ${\rm ShT}_P(\lambda,n)\neq \emptyset$ and ${\rm ShT}_Q(\lambda,n)\neq \emptyset$.
\begin{case}\label{exampleSVT}
Let $\lambda=(4,2,1) \in \mathcal{SP}_7,\ n=3$.
We take two shifted tableaux from ${\rm ShT}_P(\lambda,3)$ and ${\rm ShT}_Q(\lambda,3)$, respectively, as shown below:\vspace{3mm}
\begin{align*}
{\raisebox{-5.5pt}[0pt][0pt]{${\rm ShT}_P(\lambda,3)\ni\ $}} {\raisebox{-5.5pt}[0pt][0pt]{$T_1=$}} {\raisebox{1pt}[0pt][0pt]{\ytableaushort{1112,\none{2}2,\none\none3}},\ \ } {\raisebox{-5.5pt}[0pt][0pt]{$T_2=$}}
{\raisebox{1pt}[0pt][0pt]{\ytableaushort{{1}1{\ \!3'}{3},\none{2}{\ \!3'},\none\none{3}}}.}
\end{align*}
\vspace{20mm}
\begin{align*}
{\raisebox{-5.5pt}[0pt][0pt]{${\rm ShT}_Q(\lambda,3)\ni\ $}} {\raisebox{-5.5pt}[0pt][0pt]{$T_3=$}} {\raisebox{1pt}[0pt][0pt]{\ytableaushort{{\ \!1'}11{\ \!2'},\none{2}2,\none\none3}},\ \ }
{\raisebox{-5.5pt}[0pt][0pt]{$T_4=$}} {\raisebox{1pt}[0pt][0pt]{\ytableaushort{{\ \!1'}{\ \!2'}{\ \!3'}3,\none{\ \!2'}{\ \!3'},\none\none{3}}}.}\\[8mm]
\end{align*}
On the other hand, the two diagrams below are not elements of ${\rm ShT}_P(\lambda,3)$ or ${\rm ShT}_Q(\lambda,3)$.\vspace{5mm}
\begin{align*}
{\raisebox{1pt}[0pt][0pt]{\ytableaushort{{\ \!1'}1{\ \!2'}2,\none{2}{3},\none\none3}},\ \ }
{\raisebox{1pt}[0pt][0pt]{\ytableaushort{{1}1{\ \!2'}{3},\none{\ \!2'}{\ \!2'},\none\none{3}}}.}\\[8mm]
\end{align*}
\begin{align*}
{{\raisebox{20pt}[0pt][0pt]{(\scalebox{0.6}{{\raisebox{6pt}[0pt][0pt]{\ytableaushort{k,k,}}}}\ \text{is not allowed.})}}}\quad{{\raisebox{20pt}[0pt][0pt]{(\scalebox{0.6}{{\raisebox{-3pt}[0pt][0pt]{\ytableaushort{{k'\!}{k'\!}}}}}\ \text{is not allowed.})}}}
\end{align*}
\end{case}
We define the weight of a shifted tableau $T$ as
\begin{align}\label{weight}
\omega(T) = (\omega_1(T),\omega_2(T),\dots,\omega_n(T)) \in (\mathbb{Z}_{\geq 0})^n,
\end{align}
where $\omega_k(T) \coloneqq |\{ k',k \in [n',n] \mid k',k \in T \}|$.
The corresponding monomial with $x = (x_1, x_2, \dots, x_n) \in \mathbb{C}^n$ is defined by the weight $\omega(T)$ as follows:
\begin{align}\label{mono}
x^{\omega(T)} = x_1^{\omega_1(T)} x_2^{\omega_2(T)} \cdots x_n^{\omega_n(T)}.
\end{align}
The monomials $x^{\omega(T)}$ for $T=T_i$ ($i=1,2,3,4$) are given by
\begin{align*}
x^{\omega(T_1)}=x_1^3x_2^3x_3,\ x^{\omega(T_2)}=x_1^2x_2x_3^4,\ x^{\omega(T_3)}=x_1^3x_2^3x_3,\ x^{\omega(T_4)}=x_1x_2^2x_3^4.
\end{align*}
The Schur $P$- and $Q$-functions are defined as follows:
\begin{align}
P_\lambda(x) \coloneqq \sum_{T \in {\rm ShT}_P(\lambda,n)}x^{\omega(T)},\label{P}\\
Q_\lambda(x) \coloneqq \sum_{T \in {\rm ShT}_Q(\lambda,n)}x^{\omega(T)}.\label{Q}
\end{align}
The functions $P_\lambda(x)$ and $Q_{\lambda}(x)$ satisfy the following:
\begin{align}\label{P=Q}
Q_{\lambda}(x) = 2^{\ell(\lambda)}P_{\lambda}(x).
\end{align}
This equation can be checked from the definitions~\eqref{P} and~\eqref{Q}. 
Indeed, for a diagonal box containing an entry $k \in [n]$, replacing $k$ by $k' \in [n']$ does not change the monomial $x^{\omega(T)}$, 
since the weight $\omega_k(T)$ counts occurrences of $k$ and $k'$ together. 
There are $\ell(\lambda)$ diagonal boxes and two independent choices (primed or unprimed) for each entry. 
Each tableau contributing to $P_{\lambda}(x)$ corresponds to $2^{\ell(\lambda)}$ tableaux contributing to $Q_{\lambda}(x)$, which gives this relation.

\subsection{Shifted set-valued tableaux and $K$-theoretic Schur $P$- and   $Q$-functions}\label{dfn}
In this subsection, we first provide the definitions and examples of shifted semistandard set-valued tableaux based on~\cite{IN13}. Following this, we define the $K$-theoretic Schur $P$- and $Q$-functions as a sum over these tableaux.
\begin{dfn}[\cite{IN13}]\label{ssvt}
A \textit{shifted semistandard set-valued tableau of shape $\lambda$} is a map $T : {\rm SYD}_\lambda \rightarrow 2^{[n',n]}$ satisfying the following conditions:
\begin{align*}
(1)&\  |T_{i,j}| \geq 1,\quad \max{T_{i,j}} \leq \min{T_{i,j+1}},\quad \max{T_{i,j}} \leq \min{T_{i+1,j}};\\
(2)&\ k \in [n]\ \text{appears at most once in each column};\\
(3)&\ k' \in [n']\ \text{appears at most once in each row};\\
(4)&\ T_{i,i} \subseteq [n].
\end{align*}
\end{dfn}
We abbreviate shifted set-valued semistandard tableaux as shifted set-valued tableaux and denote the set of all shifted set-valued tableaux of shape $\lambda$ by ${\rm SSVT}_P(\lambda)$.
To specify the number of variables $n$, we use the notation ${\rm SSVT}_P(\lambda,n)$.
Additionally, the set of all shifted tableaux of shape $\lambda$ in $n$ variables, excluding the fourth condition (4) in Definition~\ref{ssvt}, is represented as ${\rm SSVT}_Q(\lambda,n)$.
We also only treat cases where ${\rm SSVT}_P(\lambda,n) \neq \emptyset$ and ${\rm SSVT}_Q(\lambda,n) \neq \emptyset$.
\begin{case}\label{exampleSSVT}
Let $\lambda=(4,2,1) \in \mathcal{SP}_7,\ n=3$.
We take two shifted set-valued tableaux from each of ${\rm SSVT}_P(\lambda,3)$ and ${\rm SSVT}_Q(\lambda,3)$ as follows:\vspace{5mm}
\begin{align*}
{\raisebox{-5.5pt}[0pt][0pt]{${\rm SSVT}_P(\lambda,3)\ni\ $}} {\raisebox{-5.5pt}[0pt][0pt]{$T^{(1)}=$}} {\raisebox{1pt}[0pt][0pt]{\ytableaushort{{1}1{1}{\ \!3'},\none{2}{23'\!},\none\none{3}}},\ \ } {\raisebox{-5.5pt}[0pt][0pt]{$T^{(2)}=$}}
{\raisebox{1pt}[0pt][0pt]{\ytableaushort{{1}{\ \!2'}{2}{23'\!},\none{2}{\ \!3'},\none\none{3}}.}}
\end{align*}
\vspace{8mm}
\begin{align*}
{\raisebox{-5.5pt}[0pt][0pt]{${\rm SSVT}_Q(\lambda,3)\ni\ $}} {\raisebox{-5.5pt}[0pt][0pt]{$T^{(3)}=$}} {\raisebox{1pt}[0pt][0pt]{\ytableaushort{{\ \!1'}{1}{1}{1},\none{\ \!2'}{2},\none\none{3'3}}},\ \ } {\raisebox{-5.5pt}[0pt][0pt]{$T^{(4)}=$}}{\raisebox{1pt}[0pt][0pt]{\ytableaushort{{\ \!1'}11{123},\none{2}{2},\none\none{\ \!3'}}},}\\[8mm]
\end{align*}
where for example, in $T^{(2)}$, $\scalebox{0.7}{{\raisebox{-1pt}[0pt][0pt]{\ytableaushort{{23'\!}}}}}$ indicates that the subset $T^{(2)}_{1,4} = \{2, 3'\} \subset [3', 3]$ is assigned to the box $(1,4)$ of $\lambda$.
Throughout this paper, we use only single-digit variables for examples of shifted set-valued (skew) tableaux.
\end{case}
We define the weight each of $T \in {\rm SSVT}_P(\lambda, n)$ and $T \in {\rm SSVT}_Q(\lambda, n)$ according to~\eqref{weight}.
The corresponding monomial $x^{\omega(T)}$ is defined as in~\eqref{mono}.
The monomials $x^{\omega(T)}$ for $T=T^{(i)}$ ($i=1,2,3,4$) are given by
\begin{align*}
x^{\omega(T^{(1)})}=x_1^3x_2^2x_3^3,\ x^{\omega(T^{(2)})}=x_1x_2^4x_3^3,\ x^{\omega(T^{(3)})}=x_1^4x_2^2x_3^2,\ x^{\omega(T^{(4)})}=x_1^4x_2^3x_3^2.
\end{align*}
Following~\cite{IN13}, {\it K-theoretic Schur P- and Q-functions} are defined as follows:
\begin{align}
GP_\lambda(x \mid \beta)&\coloneqq \sum_{T \in {\rm SSVT}_P(\lambda,n)}\beta^{|T|-|\lambda|}x^{\omega(T)},\label{GP}\\
GQ_\lambda(x \mid \beta)&\coloneqq \sum_{T \in {\rm SSVT}_Q(\lambda,n)}\beta^{|T|-|\lambda|}x^{\omega(T)}\label{GQ},
\end{align}
where $|T|$ represents the number of positive integers and positive half-integers assigned in $T$.
Setting $\beta = 0$ in~\eqref{GP} and~\eqref{GQ} yields $GP_{\lambda}(x \mid 0)$ and $GQ_{\lambda}(x \mid 0)$, which are $P_{\lambda}(x)$ and $Q_{\lambda}(x)$, respectively.

\subsection{Shifted set-valued skew tableaux and $K$-theoretic skew Schur $P$- and   $Q$-functions}

For two strict partitions $\lambda \in \mathcal{SP}_l$ and $\mu \in \mathcal{SP}_m$, with $l\geq m$ and $\lambda_i \geq \mu_i$, we define $\lambda / \mu \coloneqq (\lambda_1 - \mu_1, \lambda_2 - \mu_2, \dots, \lambda_{\ell(\mu)}-\mu_{\ell(\mu)},\lambda_{\ell(\mu)+1}, \dots, \lambda_{\ell(\lambda)} )$.
The set
\begin{align*}
{\rm SSYD}_{\lambda/\mu} \coloneqq \{ (i,j) \in (\mathbb{Z}_{\geq 0})^2 \mid 1\leq i \leq \ell(\lambda),\ \mu_i+i \leq j \leq \lambda_i + i - 1 \}.
\end{align*}
is called {\it shifted skew Young diagram of shape} $\lambda / \mu$.
We consider a given $\lambda / \mu$ and its associated shifted skew Young diagram to be equivalent.
The representation of $\lambda / \mu$ using the boxes $(i,j)$ is the same as that for shifted Young diagrams.
For example, for $\lambda = (6, 4, 3, 1) \in \mathcal{SP}_{14}$ and $\mu = (4, 2) \in \mathcal{SP}_6$, the shifted skew Young diagram $\lambda / \mu$ is obtained as shown below:\vspace{5mm}
\begin{equation}\label{skewexample}
{\raisebox{-11pt}[0pt][0pt]{$\lambda/\mu=\ $}}{\raisebox{0pt}[0pt][0pt]{\ytableaushort{\ \ \ \ \ \ ,\none\ \ \ \ ,\none\none\ \ \ ,\none\none\none\ }}}{\raisebox{-11pt}[0pt][0pt]{$\ -\ $}}{\raisebox{1pt}[0pt][0pt]{\ytableaushort{\ \ \ \ ,\none\ \ }}}{\raisebox{-11pt}[0pt][0pt]{$\ =\ $}}{\raisebox{0pt}[0pt][0pt]{\ytableaushort{\none\none\ \ ,\none\ \ ,\ \ \ ,\none\ }}.}\vspace{20mm}
\end{equation}
The diagonal positions of $\lambda/\mu$ are taken to be the same as those of $\lambda$.
For $\lambda/\mu$, define the {\it shifted semistandard skew tableau} as the diagram obtained by satisfying the conditions (1),(2),(3),(4) in Definition~\ref{sht}.
We denote the set of all shifted semistandard skew tableaux of shape $\lambda/\mu$ with $n$ variables as ${\rm ShT}_P(\lambda/\mu,n)$.
Let ${\rm ShT}_Q(\lambda/\mu,n)$ be the set of all shifted semistandard skew tableaux where the fourth condition (4) in Definition~\ref{sht} is excluded.
The {\it skew Schur} $P$- and {\it Schur} $Q$-{\it functions} are defined as follows:
\begin{align}
P_{\lambda/\mu}(x) \coloneqq \sum_{T \in {\rm ShT}_P(\lambda/\mu,n)}x^{\omega(T)},\label{P/}\\
Q_{\lambda/\mu}(x) \coloneqq \sum_{T \in {\rm ShT}_Q(\lambda/\mu,n)}x^{\omega(T)}.\label{Q/}
\end{align}

One type of generalization of $P_{\lambda/\mu}(x)$ and $Q_{\lambda/\mu}(x)$ was introduced in~\cite{LM21}. $P_{\lambda/\mu}(x)$ and $Q_{\lambda/\mu}(x)$ are specializations of skew Hall-Littlewood/Macdonald polynomials.
Given a $\lambda/\mu$, the {\it shifted semistandard set-valued skew tableau} is defined as the diagram that meets Definition~\ref{ssvt}.
We denote the set of all shifted semistandard set-valued skew tableaux of shape $\lambda / \mu$ having $n$ variables by ${\rm SSVT}_P(\lambda / \mu, n)$.
We assume that ${\rm SSVT}_P(\lambda / \mu, n) \neq \emptyset$.
Define the weight for $T \in {\rm SSVT}_P(\lambda/\mu,n)$ as~\eqref{weight} and the monomial $x^{\omega(T)}$ by~\eqref{mono}.
The set of all shifted tableaux of shape $\lambda/\mu$ in $n$ variables, excluding the fourth condition (4) from Definition~\ref{ssvt}, is denoted as ${\rm SSVT}_Q(\lambda/\mu,n)$.
For $T \in {\rm SSVT}_Q(\lambda/\mu,n)$, define $x^{\omega^{(T)}}$ in the same way.
Following~\cite{LM21}, the {\it K-theoretic skew Schur P- and Q-functions} are defined as follows:
\begin{align}
GP_{\lambda/\mu}(x \mid \beta)&\coloneqq \sum_{T \in {\rm SSVT}_P(\lambda/\mu,n)}\beta^{|T|-|\lambda/\mu|}x^{\omega(T)},\label{skewGP}\\
GQ_{\lambda/\mu}(x \mid \beta)&\coloneqq \sum_{T \in {\rm SSVT}_Q(\lambda/\mu,n)}\beta^{|T|-|\lambda/\mu|}x^{\omega(T)}\label{skewGQ}.
\end{align}

\section{Special values of $GP_{\lambda/\mu}$ and $GQ_{\lambda/\mu}$}\label{section3}
In this section, we show our main result.
In~\cite{FNS23}, a special value of the skew Grothendieck polynomial $G_{\lambda/\mu}$ is obtained using an involution on the set of set-valued tableaux ${\rm SVT}(\lambda/\mu,n)$:
\begin{thm}[\cite{FNS23}]
Let $\lambda$ and $\mu$ be partitions such that $\lambda \supset \mu$. 
We have
\begin{align*}
G_{\lambda/\mu}(\beta,\dots,\beta \mid -\beta^{-1}) = \beta^{|\lambda/\mu|}.
\end{align*}
\end{thm}
We prove the same result for $GP_{\lambda/\mu}$ by applying a similar argument and introducing an involution on the set ${\rm SSVT}_P(\lambda/\mu,n)$.
The proof for $GQ_{\lambda/\mu}$ follows analogously.
To derive the main result, we establish three lemmas.

\begin{lem}\label{lem1}
Let $\lambda$ and $\mu$ be strict partitions such that $\lambda \supset \mu$.
There exist unique tableaux $T_P \in {\rm SSVT}_P(\lambda/\mu,n)$ and $T_Q \in {\rm SSVT}_Q(\lambda/\mu,n)$ that satisfy the following conditions:
 \begin{align}
\sum_{(i,j) \in \lambda/\mu}\sum_{t \in (T_P)_{i,j}}t
  &= \
  \min_{T \in {\rm SSVT}_P(\lambda/\mu,n)}
  \sum_{(i,j) \in \lambda/\mu}\sum_{t \in T_{i,j}}t, \label{dfnT_P} \\
\sum_{(i,j) \in \lambda/\mu}\sum_{t \in (T_Q)_{i,j}}t
  &= \
  \min_{T \in {\rm SSVT}_Q(\lambda/\mu,n)}
  \sum_{(i,j) \in \lambda/\mu}\sum_{t \in T_{i,j}}t. \label{dfnT_Q}
\end{align}
Moreover, we have $T_P \in {\rm ShT}_P(\lambda/\mu,n)$ and $T_Q \in {\rm ShT}_Q(\lambda/\mu,n)$.
\end{lem}

\begin{proof}
We prove the uniqueness of $T_P$. 
The case of $T_Q$ can be shown in a similar manner.
For two distinct boxes $(i,j)$ and $(\tilde{i},\tilde{j})$ in $\lambda/\mu$, let the order be specified below:
\begin{align}\label{order}
&(i,j) < (\tilde{i},\tilde{j}) \quad\overset{\text{def}}{\Longleftrightarrow}\quad
  \begin{cases}
    j<\tilde{j},\\
    j=\tilde{j}\ \text{and}\ i<\tilde{i}.\\
  \end{cases}
\end{align}
Namely, boxes located further to the upper left of $\lambda/\mu$ are considered smaller in terms of the order.
We take two tableaux, $T_1, T_2 \in {\rm SSVT}_P(\lambda/\mu,n)$, that both minimize the sum of entries:
\begin{align*}
\min_{T \in {\rm SSVT}_P(\lambda/\mu,n)}
\sum_{(i,j) \in \lambda/\mu} \sum_{t \in T_{i,j}} t
=
\sum_{(i,j) \in \lambda/\mu} \sum_{t \in (T_1)_{i,j}} t
=
\sum_{(i,j) \in \lambda/\mu} \sum_{t \in (T_2)_{i,j}} t.
\end{align*}
We assume $T_1 \neq T_2$, then $|\{ (i,j) \in \lambda/\mu \mid (T_1)_{i,j} \neq (T_2)_{i,j} \}|\geq 1$.
Let $(i_0, j_0)$ be the minimal box of this set with respect to the order \eqref{order}.
We put $t_0 = \min \big( (T_1)_{i_0,j_0} \cup (T_2)_{i_0,j_0} \big)$.
By this definition, at least one of $\{ t_0 \} \neq (T_1)_{i_0,j_0}$ or $\{ t_0 \} \neq (T_2)_{i_0,j_0}$ must hold.
Without loss of generality, we assume $\{ t_0 \} \neq (T_1)_{i_0,j_0}$ and define a new tableau $T_0$ as follows:
\begin{align*}
(T_0)_{i,j} =
\begin{cases}
(T_1)_{i,j} & ((i,j) \neq (i_0,j_0)), \\
\{ t_0 \} & ((i,j) = (i_0,j_0)).
\end{cases}
\end{align*}
We verify that $T_0 \in {\rm SSVT}_P(\lambda/\mu,n)$.
Since $(i_0, j_0)$ is the minimal box where the entries of $T_1$ and $T_2$ differ, it follows that $(T_1)_{i,j} = (T_2)_{i,j}$ for any box $(i,j) < (i_0, j_0)$.
By the definition of $t_0$, we have either $t_0 = \min((T_1)_{i_0,j_0})$ or $t_0 = \min((T_2)_{i_0,j_0})$.
We first show that $T_0$ satisfies the condition (1) of Definition~\ref{ssvt} in the following.
\begin{itemize}
    \item If $(i_0,j_0-1) \in \lambda/\mu$, we have $\max((T_0)_{i_0,j_0-1}) = \max((T_1)_{i_0,j_0-1}) \le \min((T_1)_{i_0,j_0})$ since $T_1 \in {\rm SSVT}_P(\lambda/\mu,n)$.
    Therefore, it holds that $\max((T_0)_{i_0,j_0-1}) \le t_0 = \min((T_0)_{i_0,j_0})$.
     Similarly, if $(i_0-1,j_0) \in \lambda/\mu$, we can show that $\max((T_0)_{i_0-1,j_0}) \leq \min((T_0)_{i_0,j_0})$.
    \item If $(i_0,j_0+1) \in \lambda/\mu$,
    since $t_0 \le \max((T_1)_{i_0,j_0})$ and $\max((T_1)_{i_0,j_0}) \le \min((T_1)_{i_0,j_0+1})$, we have $\max((T_0)_{i_0,j_0}) = t_0 \le \min((T_1)_{i_0,j_0+1}) = \min((T_0)_{i_0,j_0+1})$.
    Similarly, if $(i_0+1,j_0) \in \lambda/\mu$, the condition $\max((T_0)_{i_0,j_0}) \leq \min((T_0)_{i_0+1,j_0})$ holds.
\end{itemize}
In addition, the condition (1) of Definition~\ref{ssvt} for $T_0$, except for the above cases, is satisfied due to the definition of $T_0$, and the fact that $T_1 \in {\rm SSVT}_P(\lambda/\mu,n)$. 

We confirm that $T_0$ satisfies conditions (2), (3), and (4) of Definition~\ref{ssvt}.
\begin{itemize}
    \item If $(i_0-1,j_0) \in \lambda/\mu$ and $k \in (T_0)_{i_0-1,j_0}\ (k \in [n])$, then we have $k \in (T_1)_{i_0-1,j_0}$ and $k \in (T_2)_{i_0-1,j_0}$ since $(i_0,j_0)$ is the minimal box in the order~\eqref{order}.
Therefore, we obtain $\min((T_1)_{i_0,j_0}) > k$ and $\min((T_2)_{i_0,j_0}) > k$ due to the semistandardness of $T_1$ and $T_2$. 
It follows that $t_0 \ne k$. 
In particular, we have $k \notin (T_0)_{i_0,j_0}$.

If $(i_0+1,j_0) \in \lambda/\mu$ and $k \in (T_0)_{i_0+1,j_0}$, then we have $k \in (T_1)_{i_0+1,j_0}$.
Due to the semistandardness of $T_1$, we find $\max((T_1)_{i_0,j_0}) < k$. 
This implies $t_0 \ne k$, consequently, $k \notin \{ t_0 \} =(T_0)_{i_0,j_0}$.

In conclusion, $T_0$ satisfies the condition (2) in Definition~\ref{ssvt}.
The condition (3) in Definition~\ref{ssvt} for $T_0$ can be checked by the same way.
    \item If $(i_0,j_0)$ is a diagonal box, it follows $t_0 \in [n]$ by $T_1, T_2 \in {\rm SSVT}_P(\lambda/\mu,n)$ and the definition of $t_0$. 
Hence, $T_0$ satisfies the condition (4) of Definition~\ref{ssvt}.
\end{itemize}
Thus, we conclude that $T_0 \in {\rm SSVT}_P(\lambda/\mu,n)$.

From our assumption $\{ t_0 \} \neq (T_1)_{i_0,j_0}$, the set $(T_1)_{i_0,j_0}$ either contains an element larger than $t_0$ or consists of multiple elements.
Hence, we have
\begin{align*}
\left(\sum_{t \in (T_0)_{i_0,j_0}}t\right) \ =\  t_0 <  \left(\sum_{t \in (T_1)_{i_0,j_0}}t\right).
\end{align*}
This leads to the inequality:
\begin{align*}
\sum_{(i,j) \in \lambda/\mu} \sum_{t \in (T_0)_{i,j}} t < \sum_{(i,j) \in \lambda/\mu} \sum_{t \in (T_1)_{i,j}} t.
\end{align*}
This contradicts the assumption that $T_1$ minimizes the sum of entries.
Therefore, we obtain $T_1 = T_2$, which proves the uniqueness of $T_P$.

We show that $|(T_P)_{i,j}|=1$ for all $(i,j)$.
Assume there exists a box $(i_1,j_1)$ such that $|(T_P)_{i_1,j_1}| > 1$.
We consider a new tableau $T'_P$ where the entry in $(i_1,j_1)$ is replaced by the set $\{ \min((T_P)_{i_1,j_1}) \}$, and all other entries remain the same as in $T_P$.
This new tableau $T'_P$ still satisfies the conditions for ${\rm SSVT}_P(\lambda/\mu,n)$, but the sum of its entries is strictly smaller than that of $T_P$.
This contradicts the minimality of $T_P$.
Thus, we must have $|(T_P)_{i,j}|=1$ for all boxes $(i,j)$, which implies $T_P \in {\rm ShT}_P(\lambda/\mu,n)$.

The proof for $T_Q$ is identical, with the exception that condition (4) for diagonal boxes need not be considered.
This completes the proof.
\end{proof}

\begin{rem}
By Lemma~\ref{lem1}, since $T_P \in {\rm ShT}_P(\lambda/\mu,n)$, we sometimes identify $(T_P)_{i,j}$ with its element.
\end{rem}

\begin{lem}\label{lem2}
Let $\lambda$ and $\mu$ be strict partitions such that $\lambda \supset \mu$.
For any $T \in  {\rm SSVT}_P(\lambda/\mu,n)$ (resp. $T \in  {\rm SSVT}_Q(\lambda/\mu,n)$) and $(i,j) \in \lambda/\mu$, we have
\begin{align}
(T_P)_{i,j} \leq \min(T_{i,j})\label{minTP},\\
(T_Q)_{i,j} \leq \min(T_{i,j})\label{minTQ}.
\end{align}
\end{lem}
\begin{proof}
We take $T \in  {\rm SSVT}_P(\lambda/\mu,n)$ and assume that there exists a box that does not satisfy \eqref{minTP}.
Let $(\tilde{i},\tilde{j}) \in \lambda/\mu$ be the minimal such box under the order \eqref{order}.
We define a tableau $\widetilde{T}$ as follows:
\begin{align*}
\widetilde{T}_{i,j} =
\begin{cases}
(T_P)_{i,j} & ((i,j) \neq (\tilde{i},\tilde{j})), \\
\{\min(T_{\tilde{i},\tilde{j}})\} & ((i,j) = (\tilde{i},\tilde{j})).
\end{cases}
\end{align*}
We show that $\widetilde{T} \in {\rm SSVT}_P(\lambda/\mu,n)$.
If $(\tilde{i},\tilde{j}-1) \in \lambda/\mu$, it follows from the minimality of $(\tilde{i},\tilde{j})$ that
\begin{align*}
\max(\widetilde{T}_{\tilde{i},\tilde{j}-1}) = (T_P)_{\tilde{i},\tilde{j}-1} &\leq \min(T_{\tilde{i},\tilde{j}-1})\\
&\leq  \max(T_{\tilde{i},\tilde{j}-1}) \leq \min (T_{\tilde{i},\tilde{j}}) = \min (\widetilde{T}_{\tilde{i},\tilde{j}}).
\end{align*}
Thus, we have $\max(\widetilde{T}_{\tilde{i},\tilde{j}-1}) \leq \min (\widetilde{T}_{\tilde{i},\tilde{j}})$.
Similarly, if $(\tilde{i}-1,\tilde{j}) \in \lambda/\mu$, we can prove that $\max(\widetilde{T}_{\tilde{i}-1,\tilde{j}}) \leq \min (\widetilde{T}_{\tilde{i},\tilde{j}})$.

If $(\tilde{i},\tilde{j}+1) \in \lambda/\mu$, from the definition of $\widetilde{T}$ and the assumption on $(\tilde{i},\tilde{j})$, we obtain
\begin{align}
\max(\widetilde{T}_{\tilde{i},\tilde{j}})=\min(T_{\tilde{i},\tilde{j}}) < (T_P)_{\tilde{i},\tilde{j}} &\leq (T_P)_{\tilde{i},\tilde{j}+1} = \min (\widetilde{T}_{\tilde{i},\tilde{j}+1}).\label{cont}
\end{align}
This implies that $\max(\widetilde{T}_{\tilde{i},\tilde{j}}) < \min (\widetilde{T}_{\tilde{i},\tilde{j}+1})$.
Likewise, if $(\tilde{i}+1,\tilde{j}) \in \lambda/\mu$, it holds that $\min (\widetilde{T}_{\tilde{i},\tilde{j}})<\min(\widetilde{T}_{\tilde{i}+1,\tilde{j}})$.
Hence, $\widetilde{T}$ satisfies condition (1) of Definition~\ref{ssvt}.
The conditions (2), (3), and (4) of Definition~\ref{ssvt} can be checked analogously, following the same argument as in the proof of Lemma~\ref{lem1}.
Therefore, we find that $\widetilde{T} \in {\rm SSVT}_P(\lambda/\mu,n)$.
By the definition of $\widetilde{T}$ and the inequality from \eqref{cont}, we have 
\begin{align*}
\sum_{(i,j) \in \lambda/\mu}\sum_{t \in \widetilde{T}_{i,j}}t < \sum_{(i,j) \in \lambda/\mu}\sum_{t \in (T_P)_{i,j}}t.
\end{align*}
This contradicts the condition~\eqref{dfnT_P}  of Lemma~\ref{lem1} for $T_P$.
Thus, the equation~\eqref{minTP} must be satisfied.
The claim~\eqref{minTQ} is demonstrated in the same manner, without needing to consider condition (4) of Definition~\ref{ssvt}.
This establishes the proof.
\end{proof}

We set
\begin{align*}
{\rm SSVT}_P'(\lambda/\mu,n) \coloneqq {\rm SSVT}_P(\lambda/\mu,n)\setminus \{ T_P \},\\ {\rm SSVT}_Q'(\lambda/\mu,n) \coloneqq {\rm SSVT}_Q(\lambda/\mu,n) \setminus \{ T_Q \}.
\end{align*}
For $T \in {\rm SSVT}_P'(\lambda/\mu,n)$, let $(i,j)$ denote the smallest box in $\lambda/\mu$ with respect to the order~\eqref{order} such that $T_{i,j} \neq (T_P)_{i,j}$.
We define a map $\iota_P : {\rm SSVT}_P'(\lambda/\mu,n) \to {\rm SSVT}_P'(\lambda/\mu,n)$ as follows (see also Example~\ref{skewexamplesri} for an explicit example):
\begin{align}\label{iotaP}
  \begin{split}
    &\iota_P : {\rm SSVT}_P'(\lambda/\mu,n) \rightarrow {\rm SSVT}_P'(\lambda/\mu,n) \\
    &T_{i,j} \mapsto
  \begin{cases}
    T_{i,j} \setminus (T_P)_{i,j} & (T_{i,j}\cap (T_P)_{i,j} \neq \emptyset),\\
    T_{i,j} \sqcup (T_P)_{i,j} & (T_{i,j}\cap (T_P)_{i,j} = \emptyset),
  \end{cases}\\
   &T_{\tilde{i},\tilde{j}} \mapsto T_{\tilde{i},\tilde{j}}\ \  ((\tilde{i},\tilde{j})\neq(i,j)).
  \end{split}
\end{align}
We define a map $\iota_Q:{\rm SSVT}_Q'(\lambda/\mu,n)\rightarrow{\rm SSVT}_Q'(\lambda/\mu,n)$ analogously to $\iota_P$.
\begin{lem}\label{lem3}
The map $\iota_P$ (resp. $\iota_Q$) is well-defined and an involution on ${\rm SSVT}_P'(\lambda/\mu,n)$ (resp. ${\rm SSVT}_Q'(\lambda/\mu,n)$).
In particular, $\iota_P$ and $\iota_Q$ are bijections and $\{ \iota_P(T) \mid T \in {\rm SSVT}'_P(\lambda/\mu,n) \}=\{ T \mid T \in {\rm SSVT}'_P(\lambda/\mu,n) \}$, $\{ \iota_Q(T) \mid T \in {\rm SSVT}'_Q(\lambda/\mu,n) \}=\{ T \mid T \in {\rm SSVT}'_Q(\lambda/\mu,n) \}$.
\end{lem}

\begin{proof}

We first show that $(\iota_P(T))_{i,j}$ is non-empty. 
This is obvious for the case of $T_{i,j}\cap(T_P)_{i,j} = \emptyset$. 
We assume $T_{i,j} \cap (T_P)_{i,j} \neq \emptyset$.
If $|T_{i,j}|=1$, Lemma \ref{lem2} implies that $T_{i,j}=(T_P)_{i,j}$.
This contradicts the definition of $(i,j)$. 
Thus, we must have $|T_{i,j}|\geq 2$. 
Since $|(T_P)_{i,j}|=1$, it holds that $(\iota_P(T))_{i,j}=T_{i,j}\setminus(T_P)_{i,j}\neq\emptyset$. 
Therefore $(\iota_P(T))_{i,j}$ is non-empty.

Next, we prove the map $\iota_P$ is well-defined; that is, $\iota_P(T)\in{\rm SSVT}_P'(\lambda/\mu,n)$.
\begin{itemize}
\item If $(i-1,j)\in\lambda/\mu$, the minimality of the box $(i,j)$ ensures $(\iota_P(T))_{i-1,j}=T_{i-1,j}=(T_P)_{i-1,j}$. 
By Lemma~\ref{lem2}, we have 
\begin{align*}
(T_P)_{i,j} = \min(T_{i,j} \cup (T_P)_{i,j}),\quad(T_P)_{i,j} < \min(T_{i,j} \setminus (T_P)_{i,j}).
\end{align*}
Hence, it holds that $(T_P)_{i-1,j}\leq\min((\iota_P(T))_{i,j})$.
Thus, we obtain
\begin{align*}
\max((\iota_P(T))_{i-1,j}) = (T_P)_{i-1,j} \leq \min((\iota_P(T))_{i,j}).
\end{align*}
The condition $\max((\iota_P(T))_{i-1,j}) \leq \min((\iota_P(T))_{i,j})$ is therefore satisfied. 
A similar argument applies if $(i,j-1) \in \lambda/\mu$.
In this case, the condition $\max((\iota_P(T))_{i,j-1}) \leq \min((\iota_P(T))_{i,j})$ follows.

\item If $(i,j+1) \in \lambda/\mu$, we have $\max((\iota_P(T))_{i,j}) = \max(T_{i,j})$ by the definition of $\iota_P$ and Lemma~\ref{lem2}. 
Since $T \in {\rm SSVT}_P'(\lambda/\mu,n)$, we find $\max(T_{i,j}) \le \min(T_{i,j+1})$.  It follows from $\iota_P(T_{i,j+1})=T_{i,j+1}$ that
\begin{align*}
\max((\iota_P(T))_{i,j}) = \max(T_{i,j}) \leq \min(T_{i,j+1}) = \min((\iota_P(T))_{i,j+1}).
\end{align*}
Hence, we have $\max((\iota_P(T))_{i,j}) \leq \min((\iota_P(T))_{i,j+1})$. 
An analogous argument shows that if $(i+1,j) \in \lambda/\mu$, the condition $\max((\iota_P(T))_{i,j}) \leq \min((\iota_P(T))_{i+1,j})$ holds.
Therefore, we confirm that $\iota_P(T)$ satisfies condition (1) of Definition~\ref{ssvt}.
\end{itemize}

We verify that $\iota_P(T)$ satisfies conditions (2), (3), and (4) of Definition~\ref{ssvt}.
\begin{itemize}
\item If $(i-1,j)\in\lambda/\mu$ and $k\in(\iota_P(T))_{i-1,j}\ (k \in [n])$, then by construction of $\iota_P$, it holds that $k\in T_{i-1,j}$. 
From the definition of $(i,j)$, we have $T_{i-1,j}=(T_P)_{i-1,j}$.
This implies that $k=(T_P)_{i-1,j}$.
By the semistandardness of $T_P$, it follows that $k=(T_P)_{i-1,j} < (T_P)_{i,j}$. 
This shows
\begin{align*}
k < (T_P)_{i,j} \leq \min((\iota_P(T))_{i,j}).
\end{align*}
Thus, we find $k\notin(\iota_P(T))_{i,j}$. 
A similar argument applies if $(i,j-1)\in\lambda/\mu$. 

We assume $(i,j+1) \in \lambda/\mu$ and $k' \in (\iota_P(T))_{i,j}$ ($k' \in [n']$). 
We recall that $T, T_P \in {\rm SSVT}_P(\lambda/\mu,n)$, and the map $\iota_P$ modifies $T_{i,j}$ by either adding or removing $(T_P)_{i,j}$. 
Thus, it follows that $k' \in T_{i,j}$ or $k' \in (T_P)_{i,j}$.
If $k' \in T_{i,j}$, the semistandardness of $T$ immediately implies $k' \notin T_{i,j+1}$.
If $k' \in (T_P)_{i,j}$, from the definition of $\iota_P$ and the assumption $k' \in (\iota_P(T))_{i,j}$, it holds that $(\iota_P(T))_{i,j} = T_{i,j} \sqcup (T_P)_{i,j}$. 
Since $T_{i,j} \cap (T_P)_{i,j} = \emptyset$, it follows from Lemma~\ref{lem2} that $k' = (T_P)_{i,j} < \min(T_{i,j})$.
On the other hand, the semistandardness of $T$ implies $\max(T_{i,j}) \leq \min(T_{i,j+1})$. 
This leads to
    \begin{align*}
    k' < \min(T_{i,j}) \leq \max(T_{i,j}) \leq \min(T_{i,j+1}).
    \end{align*}
Hence, $k' \notin T_{i,j+1}$, and therefore $k' \notin (\iota_P(T))_{i,j+1}$ by the definition of $\iota_P$.
A similar argument shows that if $(i+1,j) \in \lambda/\mu$ and $k \in [n]$ with $k \in (\iota_P(T))_{i,j}$, then $k \notin (\iota_P(T))_{i+1,j}$.
Thus, $\iota_P(T)$ satisfies conditions (2) and (3) of Definition~\ref{ssvt}.

\item If $(i,j)$ is a diagonal box, since both $T$ and $T_P$ are in ${\rm SSVT}_P(\lambda/\mu,n)$, their entries in this box do not contain any element $k' \in [n']$. Consequently, the modified entry $(\iota_P(T))_{i,j}$ also does not contain $k'$. Thus, $\iota_P(T)$ satisfies condition (4) of Definition~\ref{ssvt}.
\end{itemize}
Furthermore, we have $(\iota_P(T))_{i,j}\neq(T_P)_{i,j}$ by the definition of $\iota_P$.
This inequality ensures that $\iota_P(T)\neq T_P$. 
We therefore conclude that $\iota_P(T)\in{\rm SSVT}_P'(\lambda/\mu,n)$.

We prove that $\iota_P$ is an involution on ${\rm SSVT}_P'(\lambda/\mu,n)$. Let $T'=\iota_P(T)$. 
By definition, the smallest box where $T$ and $T_P$ differ is $(i,j)$. 
For any box $(\hat{i},\hat{j})$ smaller than $(i,j)$, we have
\begin{align*}
(T_P)_{\hat{i}, \hat{j}} = T_{\hat{i}, \hat{j}} = T'_{\hat{i}, \hat{j}}.
\end{align*}
We already show that $T'_{i,j}\neq(T_P)_{i,j}$.
Hence, when applying $\iota_P$ to $T'$, the chosen box must be $(i,j)$. 
We examine the action of applying $\iota_P$ to $T'$.
\begin{itemize}
\item If $T_{i,j}\cap(T_P)_{i,j}\neq\emptyset$, it follows that $T'_{i,j}=T_{i,j}\setminus(T_P)_{i,j}$, which implies that $T'_{i,j}\cap(T_P)_{i,j}=\emptyset$.
Applying $\iota_P$ to $T'$ yields
\begin{align*}
(\iota_P(T'))_{i,j} = T'_{i,j} \sqcup (T_P)_{i,j} = (T_{i,j} \setminus (T_P)_{i,j}) \sqcup (T_P)_{i,j} = T_{i,j}.
\end{align*}
\item If $T_{i,j}\cap (T_P)_{i,j} = \emptyset$, we have $T'_{i,j}=T_{i,j}\sqcup(T_P)_{i,j}$, which indicates that $T'_{i,j}\cap(T_P)_{i,j} \neq \emptyset$.
Applying $\iota_P$ to $T'$ gives
\begin{align*}
(\iota_P(T'))_{i,j} = T'_{i,j} \setminus (T_P)_{i,j} = (T_{i,j} \sqcup (T_P)_{i,j}) \setminus (T_P)_{i,j} = T_{i,j}.
\end{align*}
\end{itemize}
For any other box $(\tilde{i},\tilde{j})\neq(i,j)$, it holds that $(\iota_P(\iota_P(T)))_{\tilde{i},\tilde{j}} = (\iota_P(T))_{\tilde{i},\tilde{j}} = T_{\tilde{i},\tilde{j}}$ by the definition of $\iota_P$.
Thus, since $\iota_P(\iota_P(T))=T$ for any $T\in{\rm SSVT}_P'(\lambda/\mu,n)$, it follows that $\iota_P$ is an involution on ${\rm SSVT}_P'(\lambda/\mu,n)$. 
In particular, $\iota_P$ is a bijection. 
Therefore, we have
\begin{align*}
\{ \iota_P(T) \mid T \in {\rm SSVT}'_P(\lambda/\mu,n)\} = \{ T \mid T \in {\rm SSVT}'_P(\lambda/\mu,n) \}.
\end{align*}

The proof that $\iota_Q$ is a well-defined involution on ${\rm SSVT}_Q'(\lambda/\mu,n)$ is analogous to that of $\iota_P$, with the exception of the argument for part (4) of Definition~\ref{ssvt}.
Since $\iota_Q$ is an involution, it is a bijection, and we obtain
\begin{align*}
\{ \iota_Q(T) \mid T \in {\rm SSVT}'_Q(\lambda/\mu,n) \}=\{ T \mid T \in {\rm SSVT}'_Q(\lambda/\mu,n) \}.
\end{align*}
This concludes the proof.
\end{proof}

\begin{thm}\label{skewspecialvalue}
Let $\lambda$ and $\mu$ be strict partitions such that $\lambda \supset \mu$.
We have
\begin{align}
GP_{\lambda/\mu}(\beta,\dots,\beta \mid -\beta^{-1})=\beta^{|\lambda/\mu|},\label{skewGPB}\\
GQ_{\lambda/\mu}(\beta,\dots,\beta \mid -\beta^{-1})=\beta^{|\lambda/\mu|}.\label{skewGQB}
\end{align}
\end{thm}
\begin{proof}
We prove the equation~(\ref{skewGPB}). 
By the definition of $GP_{\lambda/\mu}(x \mid \beta)$ in~\eqref{skewGP} and Lemma~\ref{lem1}, we have
\begin{align}
GP_{\lambda/\mu}(\beta,\dots,\beta\mid{-\beta^{-1}}) = \beta^{|T_P|} + \sum_{T \in {\rm SSVT}_P'(\lambda/\mu,n)}({-\beta^{-1}})^{|T|-|\lambda/\mu|}\beta^{|T|}.\label{main1}
\end{align}
By Lemma~\ref{lem3}, the map $\iota_P$ is an involution on ${\rm SSVT}'_P(\lambda/\mu,n)$ and also satisfies $|\iota_P(T)|=|T| \pm 1$ for $T \in {\rm SSVT}'_P(\lambda/\mu,n)$.
It follows that
\begin{align*}
\sum_{T \in {\rm SSVT}_P'(\lambda/\mu,n)}({-\beta^{-1}})^{|T|-|\lambda/\mu|}\beta^{|T|}
&=\sum_{T \in {\rm SSVT}_P'(\lambda/\mu,n)}({-\beta^{-1}})^{|\iota_P(T)|-|\lambda/\mu|}\beta^{|\iota_P(T)|}\\
&=\sum_{T \in {\rm SSVT}_P'(\lambda/\mu,n)}({-\beta^{-1}})^{|T|\pm1-|\lambda/\mu|}\beta^{|T|\pm1}\\
&=-\sum_{T \in {\rm SSVT}_P'(\lambda/\mu,n)}({-\beta^{-1}})^{|T|-|\lambda/\mu|}\beta^{|T|}.
\end{align*}
From this, we obtain
\begin{align*}
\sum_{T \in {\rm SSVT}_P'(\lambda,n)}({-\beta^{-1}})^{|T|-|\lambda/\mu|}\beta^{|T|}=0.
\end{align*}
Substituting this result into the equation~(\ref{main1}), we obtain
\begin{align*}
GP_{\lambda/\mu}(\beta,\dots,\beta\mid{-\beta^{-1}}) = \beta^{|T_P|} + 0 = \beta^{|\lambda/\mu|}.
\end{align*}
We remark that $|T_P|=|\lambda/\mu|$ since $T_P \in {\rm ShT}_P(\lambda/\mu,n)$.
Therefore, the equation~(\ref{skewGPB}) is valid.

By an analogous argument and Lemma~\ref{lem3}, we have that the map $\iota_Q$ is an involution on ${\rm SSVT}'_Q(\lambda/\mu,n)$ and satisfies $|\iota_Q(T)|=|T| \pm 1$ for $T \in {\rm SSVT}'_Q(\lambda/\mu,n)$. 
This implies that
\begin{align*}
\sum_{T \in {\rm SSVT}_Q'(\lambda/\mu,n)}({-\beta^{-1}})^{|T|-|\lambda/\mu|}\beta^{|T|}=0.
\end{align*}
Hence, the equation~(\ref{skewGQB}) follows. 
This completes the proof.
\end{proof}

\begin{case}\label{skewexamplesri}
Consider the shifted Young diagrams provided in \eqref{skewexample}. The tableaux $T_P \in {\rm SSVT}_P(\lambda/\mu, 5)$ and $T_Q \in {\rm SSVT}_Q(\lambda/\mu, 5)$ are given by:
\vspace{5mm}
\begin{align*}
{\raisebox{-11pt}[0pt][0pt]{$T_P=\ $}}{\raisebox{0pt}[0pt][0pt]{\ytableaushort{\none\none{\ \!1'}1,\none{\ \!1'}1,11{\ \!2'},\none2}},\quad}
{\raisebox{-11pt}[0pt][0pt]{$T_Q=\ $}}{\raisebox{0pt}[0pt][0pt]{\ytableaushort{\none\none{\ \!1'}1,\none{\ \!1'}1,{\ \!1'}1{\ \!2'},\none{\ \!2'}}}.\quad}
\end{align*}
\vspace{15mm}

\noindent
We take $T^{\langle1\rangle}$ and $T^{\langle2\rangle} \in {\rm SSVT}'_P(\lambda/\mu,5)$ below and apply $\iota_P$ to them. The results $\iota_P(T^{\langle1\rangle})$ and $\iota_P(T^{\langle2\rangle})$ are as follows:
\vspace{5mm}
\begin{align*}
{\raisebox{-11pt}[0pt][0pt]{$T^{\langle1\rangle}=\ $}}{\raisebox{0pt}[0pt][0pt]{\ytableaushort{\none\none{34'\!}{45'\!},\none{\ \!1'}{4'4},1{{\color{red}{1}}4}{\ \!5'},\none{5}}}\quad}
{\raisebox{-11pt}[0pt][0pt]{$\longrightarrow\quad$}}
{\raisebox{-11pt}[0pt][0pt]{$\iota_P(T^{\langle1\rangle})=\ $}}{\raisebox{0pt}[0pt][0pt]{\ytableaushort{\none\none{34'\!}{45'\!},\none{\ \!1'}{4'4},1{4}{\ \!5'},\none{5}}}.}\\[10mm]
\end{align*}
\vspace{5mm}
\begin{align*}
{\raisebox{-11pt}[0pt][0pt]{$T^{\langle2\rangle}=\ $}}{\raisebox{0pt}[0pt][0pt]{\ytableaushort{\none\none{\ \!1'}{3'5},\none{\ \!1'}{1},1{1}{{\color{blue}{4}}},\none{2}}}\quad}
{\raisebox{-11pt}[0pt][0pt]{$\longrightarrow\quad$}}
{\raisebox{-11pt}[0pt][0pt]{$\iota_P(T^{\langle2\rangle})=\ $}}{\raisebox{0pt}[0pt][0pt]{\ytableaushort{\none\none{\ \!1'}{3'5},\none{\ \!1'}{1},1{1}{{\color{red}{2'}}{\color{blue}{4}}},\none{2}}}.}\\[15mm]
\end{align*}
Similarly, applying the map $\iota_Q$ to the following $T^{\langle3\rangle}$ and $T^{\langle4\rangle} \in{\rm SSVT}'_Q(\lambda/\mu,5)$ yields:
\vspace{5mm}
\begin{align*}
{\raisebox{-11pt}[0pt][0pt]{$T^{\langle3\rangle}=\ $}}{\raisebox{0pt}[0pt][0pt]{\ytableaushort{\none\none{\ \!1'}{{\color{red}{1}}23},\none{\ \!1'}{1},{\ \!1'}{1}{\ \!2'},\none{\ \!2'}}}\quad}
{\raisebox{-11pt}[0pt][0pt]{$\longrightarrow\quad$}}
{\raisebox{-11pt}[0pt][0pt]{$\iota_Q(T^{\langle3\rangle})=\ $}}{\raisebox{0pt}[0pt][0pt]{\ytableaushort{\none\none{\ \!1'}{23},\none{\ \!1'}{1},{\ \!1'}{1}{\ \!2'},\none{\ \!2'}}}.}\\[10mm]
\end{align*}
\vspace{5mm}
\begin{align*}
{\raisebox{-11pt}[0pt][0pt]{$T^{\langle4\rangle}=\ $}}{\raisebox{0pt}[0pt][0pt]{\ytableaushort{\none\none{{\color{blue}{3'\!}}}{345},\none{\ \!1'}{3'\!4'\!},{\ \!1'}{1}{4'5},\none{\ \!2'}}}\quad}
{\raisebox{-11pt}[0pt][0pt]{$\longrightarrow\quad$}}
{\raisebox{-11pt}[0pt][0pt]{$\iota_Q(T^{\langle4\rangle})=\ $}}{\raisebox{0pt}[0pt][0pt]{\ytableaushort{\none\none{{\color{red}{1'}}{\color{blue}{3'}}}{345},\none{\ \!1'}{3'\!4'\!},{\ \!1'}{1}{4'5},\none{\ \!2'}}}.}\\[15mm]
\end{align*}
\end{case}

We obtain the following result as a corollary of Theorem~\ref{skewspecialvalue}.
\begin{cor}\label{oddness}
Let $\lambda$ and $\mu$ be strict partitions such that $\lambda \supset \mu$.
Both $|{\rm SSVT}_P(\lambda/\mu,n)|$ and $|{\rm SSVT}_Q(\lambda/\mu,n)|$ are odd.
\end{cor}
\begin{proof}
We prove the claim for $|{\rm SSVT}_P(\lambda/\mu,n)|$.
\begin{align*}
|{\rm SSVT}_P(\lambda/\mu,n)|&=GP_{\lambda/\mu}(1,1,\dots,1\mid 1)\\
&\equiv GP_{\lambda/\mu}(1,1,\dots,1\mid -1)\pmod 2 \\
&=1\quad ({\rm Theorem}~\ref{skewspecialvalue}).
\end{align*}
It follows that $|{\rm SSVT}_P(\lambda/\mu,n)|$ is odd.
A similar argument applies to $|{\rm SSVT}_Q(\lambda/\mu,n)|$.
\end{proof}
\begin{rem}\label{remsri}
Since the maps $\iota_P$ and $\iota_Q$ satisfy $\iota_P^2=\iota_Q^2={\rm id}$ and $(-1)^{|\iota_P(T)|}=(-1)^{|\iota_Q(T)|}=-(-1)^{|T|}$, these maps are called \textit{sign-reversing involutions}.
\end{rem}
\begin{rem}\label{reneyd}
In~\cite{IN13}, {\it excited Young diagrams} were introduced to provide another combinatorial representation of $K$-theoretic Schur $P$- and $Q$-functions as follows:
\begin{align*}
GP_{\lambda}(x \mid \beta) &= \sum_{D\in \mathcal{E}_n^{B}(\lambda)}wt^{B}(D),\\
GQ_{\lambda}(x \mid \beta) &= \sum_{D\in \mathcal{E}_n^{C}(\lambda)}wt^{C}(D),
\end{align*}
where $X$ is either $B$ or $C$, and
\begin{align*}
wt^{X}(D)&=\prod_{(i,j) \in D}wt^{X}(i,j)\prod_{(\tilde{i},\tilde{j}) \in \mathcal{B}^{X}(D)}\left(1+\beta wt^X(\tilde{i},\tilde{j})\right).
\end{align*}
We refer to~\cite{IN13} for the details of the symbols and definitions.
By setting $x_1=\cdots=x_n=\beta$ and applying $\beta \mapsto -\beta^{-1}$, we have $wt^{X}(D)=0$ unless an excited Young diagram $D \in \mathcal{E}_n^{X}(\lambda)$ is an ordinary Young diagram.
From this, we can confirm that the equations~\eqref{skewGPB} and~\eqref{skewGQB} hold in the case of a non-skew Young diagram.
\end{rem}

\section{Special values of $GP_{\lambda/\!\!/\mu}$ and $GQ_{\lambda/\!\!/\mu}$}\label{section4}

In this section, we discuss another type of skew version of the $K$-theoretic Schur $P$- and $Q$-functions.
We denote a symmetric function $f$ with symmetric variables $x, y \in \mathbb{C}^n$ as $f(x, y)$.
It is known that the following branching formulas hold \cite{Mac95}:
\begin{align}
P_{\lambda}(x,y)&= \sum_{\nu} P_{\nu}(x)P_{\lambda/\nu}(y),\label{P(x,y)}\\
Q_{\lambda}(x,y)&= \sum_{\nu} Q_{\nu}(x)Q_{\lambda/\nu}(y),\label{Q(x,y)}
\end{align}
where in both sums $\nu$ ranges over all strict partitions.
These identities~\eqref{P(x,y)} and~\eqref{Q(x,y)} follow from the definition of $P_{\lambda}(x)$ and $Q_{\lambda}(x)$, respectively.
To introduce the $K$-theoretic version of the identities~\eqref{P(x,y)} and~\eqref{Q(x,y)}, the following functions $GP_{\lambda/\!\!/\mu}(x \mid \beta)$ and $GQ_{\lambda/\!\!/\mu}(x \mid \beta)$ are defined in~\cite{CM23}.
For $\mu \in \mathcal{SP}_m$, we call the position $(i,j) \in \mu$ {\it removable box} if $\mu \setminus \{(i,j)\} \in \mathcal{SP}_{m-1}$.
Let ${\rm Rem}(\mu)$ denote all the removable boxes of $\mu$ and its cardinality by $a \coloneqq |{\rm Rem}(\mu)|$.
Following~\cite{CM23} (see also~\cite{LM21,M24}), for $\lambda \supseteq \mu$, define
\begin{align}
GP_{\lambda/\!\!/\mu}(x \mid \beta) &\coloneqq \sum_{\nu \subseteq \mu} \beta^{|\mu/\nu|}GP_{\lambda/\nu}(x \mid \beta),\label{GP//}\\
GQ_{\lambda/\!\!/\mu}(x \mid \beta) &\coloneqq \sum_{\nu \subseteq \mu} \beta^{|\mu/\nu|}GQ_{\lambda/\nu}(x \mid \beta)\label{GQ//},
\end{align}
where in both sums over all strict partitions $\nu \subseteq \mu$ such that $\mu/\nu \subseteq {\rm Rem}(\mu)$.
If $\lambda \nsupseteq \mu$, we put $GP_{\lambda/\!\!/\mu}(x \mid \beta)=0$ and $GQ_{\lambda/\!\!/\mu}(x \mid \beta)=0$.
These functions satisfy the following identities~\cite{CM23}:
\begin{align}
GP_{\lambda}(x,y \mid \beta)&= \sum_{\nu} GP_{\nu}(x \mid \beta)GP_{\lambda/\!\!/\nu}(y \mid \beta),\label{GP(x,y)}\\
GQ_{\lambda}(x,y \mid \beta)&= \sum_{\nu} GQ_{\nu}(x \mid \beta)GQ_{\lambda/\!\!/\nu}(y \mid \beta),\label{GQ(x,y)}
\end{align}
where the sums in both equations are over all strict partitions $\nu$.
These identities~\eqref{GP(x,y)} and~\eqref{GQ(x,y)} can also be derived from the definitions of $GP_{\lambda}(x \mid \beta)$ and $GQ_{\lambda}(x \mid \beta)$, respectively.
From Theorem~\ref{skewspecialvalue}, we obtain the following:
\begin{cor}\label{0skewspecialvalue}
Let $\lambda$ and $\mu$ be strict partitions.
For $\lambda \supsetneq \mu$, we have
\begin{align}
&GP_{\lambda/\!\!/\mu}(\beta,\dots,\beta \mid -\beta^{-1})=0,\label{ssGP}\\
&GQ_{\lambda/\!\!/\mu}(\beta,\dots,\beta \mid -\beta^{-1})=0\label{ssGQ}.
\end{align}
\end{cor}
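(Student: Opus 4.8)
The plan is to substitute the defining expansion \eqref{GP//} directly into the specialization and collapse everything to a single alternating sum. First I would specialize $x_i=\beta$ and replace $\beta$ by $-\beta^{-1}$ in \eqref{GP//}, giving
\begin{align*}
GP_{\lambda/\!\!/\mu}(\beta,\dots,\beta \mid -\beta^{-1})
 = \sum_{\nu}(-\beta^{-1})^{|\mu/\nu|}\,GP_{\lambda/\nu}(\beta,\dots,\beta \mid -\beta^{-1}),
\end{align*}
where $\nu$ ranges over the strict partitions with $\nu \subseteq \mu$ and $\mu/\nu \subseteq {\rm Rem}(\mu)$. Applying Theorem~\ref{skewspecialvalue} to each summand replaces $GP_{\lambda/\nu}(\beta,\dots,\beta\mid-\beta^{-1})$ by $\beta^{|\lambda/\nu|}$, so the expression becomes $\sum_{\nu}(-\beta^{-1})^{|\mu/\nu|}\beta^{|\lambda/\nu|}$.

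Second, I would simplify the exponent of $\beta$. Since $\nu\subseteq\mu\subseteq\lambda$ we have $|\lambda/\nu|=|\lambda|-|\nu|$ and $|\mu/\nu|=|\mu|-|\nu|$, so each summand equals $(-1)^{|\mu|-|\nu|}\beta^{|\lambda|-|\mu|}$. The crucial point is that the power of $\beta$ is now \emph{independent} of $\nu$; factoring it out reduces the claim to showing that the alternating sum $\sum_{\nu}(-1)^{|\mu/\nu|}$ vanishes.

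Third, and this is the heart of the argument, I would identify the index set precisely. The removable boxes of $\mu$ are the rightmost boxes of certain rows and hence lie in distinct rows, so the partitions $\nu$ occurring in the sum are exactly those obtained from $\mu$ by deleting an arbitrary subset $S\subseteq {\rm Rem}(\mu)$. One must check that every such simultaneous deletion yields a genuine strict partition: comparing consecutive rows, the only delicate case is when row $i$ is shortened but row $i+1$ is not, and there the removability condition $\mu_i\ge\mu_{i+1}+2$ is exactly what preserves strictness. Thus $S\mapsto\nu$ is a bijection onto the index set, and
\begin{align*}
\sum_{\nu}(-1)^{|\mu/\nu|}=\sum_{S\subseteq {\rm Rem}(\mu)}(-1)^{|S|}=(1-1)^{|{\rm Rem}(\mu)|}.
\end{align*}

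Finally, I would observe that for any nonempty $\mu$ the last box of its last row is always removable (deleting it either shortens that row while preserving strictness, or empties the row altogether), so $|{\rm Rem}(\mu)|\ge 1$ and the binomial sum is $0$. This establishes \eqref{ssGP}, and \eqref{ssGQ} follows verbatim using the $GQ$ half of Theorem~\ref{skewspecialvalue} in place of the $GP$ half together with \eqref{GQ//}. The main obstacle is the third step: one has to verify carefully that the summation genuinely runs over all subsets of ${\rm Rem}(\mu)$ and that removing several removable boxes at once stays inside $\mathcal{SP}$, rather than treating the $\nu$'s as an unstructured index set; once that is settled, the vanishing is just the elementary identity $(1-1)^k=0$.
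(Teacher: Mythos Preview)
Your proposal is correct and follows essentially the same line as the paper's own proof: specialize \eqref{GP//} via Theorem~\ref{skewspecialvalue}, factor out $\beta^{|\lambda|-|\mu|}$, and reduce to the alternating sum $\sum_{i=0}^{a}\binom{a}{i}(-1)^i=0$ with $a=|{\rm Rem}(\mu)|\ge 1$. If anything, your third step is more careful than the paper, which simply asserts that the boxes of ${\rm Rem}(\mu)$ ``can be removed independently'' without spelling out why simultaneous removal stays in $\mathcal{SP}$.
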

\begin{proof}
We prove the equation~\eqref{ssGP}.
The equation~\eqref{ssGQ} is shown in a similar manner.
By~\eqref{GP//} and~\eqref{skewGPB}, we have
\begin{align}
GP_{\lambda/\!\!/\mu}(\beta,\dots,\beta \mid -\beta^{-1})&=\sum_{\nu \subseteq \mu}(-\beta^{-1})^{|\mu/\nu|}\beta^{|\lambda/\nu|}\notag\\
&=\sum_{\nu \subseteq \mu}(-1)^{|\mu|-|\nu|}\beta^{-(|\mu|-|\nu|)}\beta^{|\lambda|-|\nu|}\notag\\
&=\beta^{|\lambda|-|\mu|}\sum_{\nu \subseteq \mu}(-1)^{|\mu|-|\nu|}.\label{sgn}
\end{align}
We refer to a removable box as a corner of the shifted Young diagram.
The removable boxes of $\mu$ are located at the corners of the shifted Young diagram, since there are no adjacent boxes immediately to their right and below them. 
Thus, removing one corner does not affect the removability of the others. 
Therefore, the boxes in $\mathrm{Rem}(\mu)$ can be removed independently.
It follows that at most $|\mathrm{Rem}(\mu)| = a$ boxes can be removed from $\mu$.
Hence, the number of choices of $B \subseteq \text{Rem}(\mu)$ such that $|B|=b$ is $\binom{a}{b}$.
Furthermore, the box located at the bottom-rightmost corner of $\mu$ is removable, which implies that $a \neq 0$.
From these, we have the following:
\begin{align*}
\sum_{\nu \subseteq \mu}(-1)^{|\mu|-|\nu|}=\sum_{b=0}^{a}\binom{a}{b}(-1)^b=0.
\end{align*}
Consequently, the equation~\eqref{ssGP} is derived from~\eqref{sgn}.
\end{proof}
\begin{case}\label{exGP//}
We take $\lambda=(9,7,6,4) \in \mathcal{SP}_{26}$ and $\mu=(7,5,4,2) \in \mathcal{SP}_{18}$.
In this case, ${\rm Rem}(\mu)=\{(1,7),(3,6),(4,5)\}$.
That is, the boxes ${\rm Rem}(\mu)$ of $\mu$ are marked as follows:
\vspace{5mm}
\begin{align*}
{\raisebox{-11pt}[0pt][0pt]{$\mu=\ $}}{\raisebox{0pt}[0pt][0pt]{\ytableaushort{\ \ \ \ \ \ {\bullet},\none\ \ \ \ \ ,\none\none\ \ \ {\bullet},\none\none\none\ {\bullet}}},}\\[15mm]
\end{align*}
where the union of boxes with black dots represents ${\rm Rem}(\mu)$.
Enumerating all $\nu$ that appear the sum~\eqref{GP//}, we have the following:
\vspace{3mm}
\begin{align*}
&{\raisebox{-11pt}[0pt][0pt]{${\nu}:\ $}}{\raisebox{0pt}[0pt][0pt]{\ytableaushort{\ \ \ \ \ \ {\bullet},\none\ \ \ \ \ ,\none\none\ \ \ {\bullet},\none\none\none\ {\bullet}}},\ }{\raisebox{0pt}[0pt][0pt]{\ytableaushort{\ \ \ \ \ \ {\bullet},\none\ \ \ \ \ ,\none\none\ \ \ {\bullet},\none\none\none\ }},\ \ }{\raisebox{0pt}[0pt][0pt]{\ytableaushort{\ \ \ \ \ \ ,\none\ \ \ \ \ ,\none\none\ \ \ {\bullet},\none\none\none\ {\bullet}}},}\\[25mm]
&\phantom{{\raisebox{-11pt}[0pt][0pt]{${\nu}:\ $}}}{\raisebox{0pt}[0pt][0pt]{\ytableaushort{\ \ \ \ \ \ {\bullet},\none\ \ \ \ \ ,\none\none\ \ \ ,\none\none\none\ {\bullet}}},\ \ }{\raisebox{0pt}[0pt][0pt]{\ytableaushort{\ \ \ \ \ \ {\bullet},\none\ \ \ \ \ ,\none\none\ \ \ ,\none\none\none\ }},\ }{\raisebox{0pt}[0pt][0pt]{\ytableaushort{\ \ \ \ \ \ ,\none\ \ \ \ \ ,\none\none\ \ \ {\bullet},\none\none\none\ }},}\\[25mm]
&\phantom{{\raisebox{-11pt}[0pt][0pt]{${\nu}:\ $}}}{\raisebox{0pt}[0pt][0pt]{\ytableaushort{\ \ \ \ \ \ ,\none\ \ \ \ \ ,\none\none\ \ \ ,\none\none\none\ {\bullet}}},\ \ }{\raisebox{0pt}[0pt][0pt]{\ytableaushort{\ \ \ \ \ \ ,\none\ \ \ \ \ ,\none\none\ \ \ ,\none\none\none\ }}.}\\[15mm]
\end{align*}
Thus, it follows that
\begin{align*}
GP_{\lambda/\!\!/\mu}(\beta,\dots,\beta \mid -\beta^{-1})&=
\beta^{26-18}\sum_{\nu \subseteq \mu}(-1)^{18-|\nu|}\\
&=\beta^8\sum_{b=0}^3 \binom{3}{b}(-1)^b\\
&=\beta^{8}\{ (-1)^{0}+3(-1)^{1}+3(-1)^{2}+(-1)^3 \}=0.
\end{align*}
\end{case}
By the equation~\eqref{GP//}, we have
\begin{align}
GP_{\lambda/\!\!/\mu}(1,\dots,1 \mid -1) &\equiv GP_{\lambda/\!\!/\mu}(1,\dots,1 \mid 1)\pmod 2\notag \\[-1mm]
&=\sum_{\nu \subseteq \mu} |{\rm SSVT}_P(\lambda/\nu,n)|\label{even}.
\end{align}
From the equation~\eqref{ssGP} in Corollary~\ref{0skewspecialvalue}, we have $GP_{\lambda/\!\!/\mu}(1,\dots,1 \mid -1)=0$.
Thus, the value of the right-hand side of~\eqref{even} is even.
Therefore, we can construct a pairing correspondence between the shifted set-valued skew tableaux that appear in~\eqref{even}.
We explicitly construct pairs of tableaux in the following.
%The same argument applies to the sum $\sum_{\nu \subseteq \mu} |{\rm SSVT}_Q(\lambda/\nu,n)|$.

For $\emptyset \subseteq B \subseteq {\rm Rem}(\mu)$, we put $\nu^B$ such that $\mu/\nu^B=B$.
Let $N = \{ \nu^{B} \mid \emptyset \subseteq B \subseteq {\rm Rem}(\mu)\}$, and $b^\mu$ be the box located at the bottom-right corner of $\mu$.
Note that the box $b^\mu$ belongs to the set of removable boxes, $b^\mu \in \text{Rem}(\mu)$.

We define a map $\pi$ on the set $N$ as follows:
\begin{align*}\label{pi}
  \begin{split}
    &\pi : N \rightarrow N \\
    &\nu \mapsto
  \begin{cases}
    \nu \setminus \{ b^\mu \} & (b^\mu \in \nu),\\
    \nu \sqcup \{ b^\mu \} &  (b^\mu \notin \nu).
  \end{cases}
  \end{split}
\end{align*}
We show that the map $\pi$ is well-defined.
Let $\nu$ be a strict partition in $N$.
If $b^\mu \in \nu$, then $\nu$ corresponds to a subset $B_1 \subseteq \text{Rem}(\mu)$ such that $b^\mu \notin B_1$.
Since $b^\mu$ is removable, $\nu \setminus \{b^\mu\}$ is again a strict partition, which we denote as $\nu^{B_1 \sqcup \{b^\mu\}}$.
Thus, it holds that
\begin{align*}
\pi(\nu) = \nu \setminus \{ b^\mu \} = \nu^{B_1 \sqcup \{ b^\mu \}} \in N.
\end{align*}
Conversely, if $b^\mu \notin \nu$, then $\nu$ is associated with a subset $B_2 \subseteq \text{Rem}(\mu)$ such that $b^\mu \in B_2$.
Given that $b^\mu$ is removable, $\nu \sqcup \{b^\mu\}$ yields a strict partition, written as $\nu^{B_2 \setminus \{b^\mu\}}$.
We therefore obtain
\begin{align*}
\pi(\nu) = \nu \sqcup \{ b^\mu \} = \nu^{B_2 \setminus \{ b^\mu \}} \in N.
\end{align*}
Consequently, for every partition $\nu \in N$, the image $\pi(\nu)$ is also a strict partition contained in $N$.
Thus, the map $\pi$ is well-defined.

It immediately follows that the map $\pi$ is an involution on $N$.
Indeed, if $b^\mu \in \pi(\nu)$, we find that
\begin{align*}
\pi(\pi(\nu)) = \pi(\nu \setminus \{ b^\mu \}) = (\nu \setminus \{ b^\mu \}) \sqcup \{ b^\mu \} = \nu.
\end{align*}
On the other hand, if $b^\mu \notin \pi(\nu)$, we have
\begin{align*}
\pi(\pi(\nu)) = \pi(\nu \sqcup \{ b^\mu \}) = (\nu \sqcup \{ b^\mu \}) \setminus \{ b^\mu \} = \nu.
\end{align*}
Therefore, since $\pi(\pi(\nu)) = \nu$ for all $\nu \in N$, the map $\pi$ is an involution on $N$.

Next, we verify the one-to-one correspondence between the shifted skew tableaux appearing in the equation~\eqref{GP//}.
By Lemma~\ref{lem1}, there exists a unique shifted skew tableau for any skew Young diagram satisfying the condition~\eqref{dfnT_P}.
Therefore, based on the definition of $\pi$ and Lemma~\ref{lem1}, the tableaux $T^{\lambda/\nu}_P \in {\rm ShT}_P(\lambda/\nu,n)$ and $T^{\lambda/\pi(\nu)}_P \in {\rm ShT}_P(\lambda/\pi(\nu),n)$ form a one-to-one pair.
%In addition, since the map $\pi$ satisfies $|\pi(\nu)|=|\nu| \pm 1$, we obtain
%\begin{align*}
%(-\beta^{-1})^{|\mu/\pi(\nu)|}\beta^{|\lambda/\pi(\nu)|} &= (-\beta^{-1})^{|\mu|-(|\nu|\pm 1)}\beta^{|\lambda|-(|\nu|\pm 1)} \\
%&= - (-\beta^{-1})^{|\mu/\nu|}\beta^{|\lambda/\nu|}.
%\end{align*}
%Therefore, the terms corresponding to $T^{\lambda/\nu}_P$ and $T^{\lambda/\pi(\nu)}_P$ cancel out in the summation~\eqref{GP//}.
The remaining tableaux in the set ${\rm SSVT}'_P(\lambda/\nu,n)$ are paired by the sign-reversing involution $\iota_P$.
Hence, we conclude that the shifted set-valued skew tableaux in the summation~\eqref{GP//} are paired by $\pi$ and $\iota_P$. 
The same argument applies to the summation~\eqref{GQ//} using the involutions $\pi$ and $\iota_Q$.
In summary, we obtain the following result.
\begin{prop}\label{pair}
For the strict partitions $\lambda \supsetneq \mu$ and $\nu$ that satisfy $\mu/\nu \subseteq {\rm Rem}(\mu)$, the shifted set-valued skew tableaux appearing in the sums~\eqref{GP//} and~\eqref{GQ//} form pairs by either $\iota_P$ or $\iota_Q$ and $\pi$.
\end{prop}

\begin{case}
We consider the same case as in Example~\ref{exGP//}.
In this situation, the box $b^\mu$ is located at $(4,5)$.
We consider the pair of corresponding shifted skew tableaux $T^{\lambda/\nu}_P \in {\rm ShT}(\lambda/\nu,n)$ and $T^{\lambda/\pi(\nu)}_P \in {\rm ShT}(\lambda/\pi(\nu),n)$ determined by the involution $\pi$.
The corresponding pairs are connected by arrows as shown below:
\vspace{5mm}
\begin{align*}
{\scalebox{0.9}{{\raisebox{10pt}[0pt][0pt]{\ytableaushort{\none\none\none\none\none\none{\ 1'}1,\none\none\none\none\none{\ 1'}1,\none\none\none\none\none{1}{\ 2'},\none\none\none\none{\ 1'}{\ 2'}}.}}}}\\[5mm]
\scalebox{1.5}{\rotatebox{220}{$\longleftrightarrow$}}\ \ \ \ \ \ \ \ \ \ \ \ \ \ \ \ \ \ \ \ 
\end{align*}
\begin{align*}
{\scalebox{0.9}{{\raisebox{10pt}[0pt][0pt]{\ytableaushort{\none\none\none\none\none\none{\ 1'}1,\none\none\none\none\none{\ 1'}1,\none\none\none\none\none{1}{\ 2'},\none\none\none{\color{magenta!80!black}\ 1'}{1}{\ 2'}},}}}}
{\scalebox{0.9}{{\raisebox{10pt}[0pt][0pt]{\ytableaushort{\none\none\none\none\none{\ 1'}1,\none\none\none\none{\ 1'}1,\none\none\none{\ 1'}1{\ 2'},\none\none\none1{\ 2'}},\ \ \ \ \ }}}}
{\scalebox{0.9}{{\raisebox{10pt}[0pt][0pt]{\ytableaushort{\none\none\none{\ 1'}1{1},\none\none\none1{\ 2'},\none\none\none{\ 2'}2,\none\none{\ 1'}2}.}}}}\\[8mm]
\scalebox{1.5}{\rotatebox{220}{$\longleftrightarrow$}}\ \ \ \ \ \ \ \ \ \ \ \ \ \ \ \ \ \ \ \ \ \  {\scalebox{1.5}{\rotatebox{220}{$\longleftrightarrow$}}}\ \ \ \ \ \ \ \ \ \ \ \ \ \ \ \ \ \ \ \ \ \ 
\end{align*}
\begin{align*}
{\scalebox{0.9}{{\raisebox{10pt}[0pt][0pt]{\ytableaushort{\none\none\none\none\none{\ 1'}1,\none\none\none\none{\ 1'}1,\none\none\none{\ 1'}1{\ 2'},\none\none{\color{cyan!95!black}\ 1'}1{\ 2'}},\ \ \ \ \ }}}}
{\scalebox{0.9}{{\raisebox{10pt}[0pt][0pt]{\ytableaushort{\none\none\none{\ 1'}1{1},\none\none\none1{\ 2'},\none\none\none{\ 2'}2,\none{\color{orange!95!white}\ 1'}12},}}}}
{\scalebox{0.9}{{\raisebox{10pt}[0pt][0pt]{\ytableaushort{\none\none\none\none{\ 1'}1{1},\none\none\none\none1{\ 2'},\none\none\none{\ 1'}{\ 2'}{2},\none\none\none12}.\ \ }}}}\\[8mm]
\scalebox{1.5}{\rotatebox{220}{$\longleftrightarrow$}}\ \ \ \ \ \ \ \ \ \ \ \ \ \ \ \ \ \ \ \ \ \ \ 
\end{align*}
\begin{align*}
{\scalebox{0.9}{{\raisebox{10pt}[0pt][0pt]{\ytableaushort{\none\none\none\none{\ 1'}1{1},\none\none\none\none1{\ 2'},\none\none\none{\ 1'}{\ 2'}{2},\none\none{\color{green!70!black}\ 1'}12}.\ \ }}}}
\end{align*}\vspace{8mm}

\noindent
The arrows, which are drawn with heads at both ends, indicate the one-to-one correspondence between the shifted skew tableaux determined by the involution $\pi$.
The boxes containing a single colored number indicate the position of the box $b^\mu$, which is the bottom-leftmost box in either $\lambda/\nu$ or $\lambda/\pi(\nu)$.

\end{case}

\section{Conclusions}\label{section5}

In this paper, we showed special values of $K$-theoretic Schur $P$- and $Q$-functions $GP_{\lambda/\mu}(x \mid \beta)$ and $GQ_{\lambda/\mu}(x \mid \beta)$ (Theorem~\ref{skewspecialvalue}).
This result was shown by defining sign-reversing involutions $\iota_P$ and $\iota_Q$ on the sets ${\rm SSVT}'_P(\lambda/\mu,n)$ and ${\rm SSVT}'_Q(\lambda/\mu,n)$, respectively.
These involutions are maps that form pairs of tableaux, except for specific tableaux $T_P$ in ${\rm SSVT}_P(\lambda/\mu,n)$ and $T_Q$ in ${\rm SSVT}_Q(\lambda/\mu,n)$.
Furthermore, these possess the property that the paired tableaux obtained from the specialization of variables and parameters have opposite signs (see Remark~\ref{remsri}).
This property implies that both special values $GP_\lambda(\beta,\dots,\beta \mid -\beta^{-1} )$ and $GQ_\lambda(\beta,\dots,\beta \mid -\beta^{-1})$ become the monomial $\beta^{|\lambda|}$.
Using this special value, we gave the odd-number property of the number of shifted set-valued tableaux  ${\rm SSVT}_P(\lambda/\mu,n)$ and ${\rm SSVT}_Q(\lambda/\mu,n)$ (Corollary~\ref{oddness}).

Additionally, there exist functions defined using $GP_{\lambda/\mu}(x \mid \beta)$ and $GQ_{\lambda/\mu}(x \mid \beta)$, which are denoted as $GP_{\lambda/\!\!/\mu}(x \mid \beta)$ and $GQ_{\lambda/\!\!/\mu}(x \mid \beta)$, respectively.
When the specialization of variables and the substitution of the parameter are performed in the same manner, the special values $GP_{\lambda/\!\!/\mu}(\beta,\dots,\beta \mid -\beta^{-1})$ and $GQ_{\lambda/\!\!/\mu}(\beta,\dots,\beta \mid -\beta^{-1})$ both become 0 (Corollary~\ref{0skewspecialvalue}).
Using this special value, we showed that the shifted set-valued skew tableaux appearing in    the definition of $GP_{\lambda/\!\!/\mu}(x \mid \beta)$ or $GQ_{\lambda/\!\!/\mu}(x \mid \beta)$ form pairs through the map $\pi$ and the sign-reversing involution $\iota_P$ or $\iota_Q$ (Proposition~\ref{pair}).

Counting the number of set-valued tableaux is an interesting problem.
In~\cite{FNS24}, an explicit formula for the number of set-valued tableaux was given.
This formula was obtained from the bi-alternant formula of Grothendieck polynomials.
Furthermore, it was mentioned in~\cite{FNS24} that the number of set-valued semistandard tableaux is related to the Gauss hypergeometric function and the Holman hypergeometric function.
If a similar bi-alternant formula is provided for $GP_{\lambda}(x \mid \beta)$ and $GQ_{\lambda}(x \mid \beta)$ polynomials, it is expected that an explicit formula for the number of shifted set-valued tableaux will be obtained.
It is also a significant problem to explore relationships between those explicit formulas and hypergeometric functions.

\section*{Acknowledgments} 
The authors would like to express their sincere gratitude to Yasuhiko Yamada for helpful discussions and constant encouragement.
The authors are deeply grateful to Yusuke Nakayama and Shogo Sugimoto for informing them about $K$-theoretic Schur $P$- and $Q$-functions, introducing relevant references in the early stages of the research, contributing to engaging discussions, and providing valuable comments on the preprint.
The authors extend their deepest thanks to Travis Scrimshaw for reviewing the preprint and offering his insightful comments on Corollary~\ref{0skewspecialvalue}.
This work was supported by JST SPRING, Grant Number JPMJSP2148, and JSPS KAKENHI Grant Number 22H01116.

\noindent{\sc Department of Education, Kogakkan University}

(Takahiko Nobukawa) {\it E-mail address}:  {\tt t-nobukawa@kogakkan-u.ac.jp}

\noindent{\sc Department of Mathematics, Graduate School of Science, Kobe University}

(Tatsushi Shimazaki) {\it E-mail address}: {\tt tsimazak@math.kobe-u.ac.jp}


\begin{thebibliography}{LS82}
\bibitem[1]{Buc02} A. S. Buch, \textit{A Littlewood-Richardson rule for the K-theory of Grassmannians}, Acta. Math.,
{\bf 189}(1): 37$\mathchar`-$78, 2002.
\bibitem[2]{CM23} Y.-C. Chiu and E. Marberg, \textit{Expanding K-theoretic Schur $Q$-functions}, Algebr. Comb., {\bf 6}(6): 1419–1445, 2023.
\bibitem[3]{FNS23} T. Fujii, T. Nobukawa and T. Shimazaki, \textit{The number of the set-valued tableaux is odd}, arXiv: 2305.06740, 2023.
\bibitem[4]{FNS24} T. Fujii, T. Nobukawa and T. Shimazaki, \textit{Special values of Grothendieck polynomials in terms of hypergeometric functions}, Hiroshima Mathematical Journal, {\bf 55}(2): 167-182, 2025.
\bibitem[5]{IN13} T. Ikeda and H. Naruse, \textit{K-theoretic analogues of factorial Schur P-and Q-functions}, In: Adv. Math., {\bf 243}: 22$\mathchar`-$66, 2013.
\bibitem[6]{Iva01} V. N. Ivanov, \textit{Combinatorial formula for factorial Schur Q-functions}, J. Math. Sci. (N.Y.) {\bf 107}, 4195–4211, 2001.
\bibitem[7]{Iva05} V. N. Ivanov, \textit{Interpolation analogues of Schur Q-functions}, J. Math. Sci. (N.Y.) {\bf 131}, 5495–5507, 2005.
\bibitem[8]{Las90} A. Lascoux, \textit{Anneau de Grothendieck de la vari\'{e}t\'{e} de drapeaux}, The Grothendieck Festschrift, Vol. {\bf III}: Progr. Math., Birkh\"{a}user, Boston, 1$\mathchar`-$34, 1990.
\bibitem[9]{LS82} A. Lascoux and M. P. Sch\"{u}tzenberger, \textit{Structure de Hopf de l'anneau de cohomologie et de
l'anneau de Grothendieck d'une vari\'{e}t\'{e} de drapeaux}, C. R. Acad. Sci. Paris S\'{e}r. I Math, {\bf 295}(11): 629$\mathchar`-$633,
1982.
\bibitem[10]{LM21} J. B. Lewis, E. Marberg, \textit{Enriched set-valued P-partitions and shifted stable Grothendieck polynomials}, Math. Z. {\bf 299}, 1929–1972, 2021.
\bibitem[11]{Mac95} I. G. Macdonald, \textit{Symmetric Functions and Hall Polynomials}, 2nd. ed., Oxford, 1995.
\bibitem[12]{M24} E. Marberg, \textit{Shifted combinatorial Hopf algebras from $K$-theory}, Algebr. Comb., {\bf 7}(4): 1123–1156, 2024.
\bibitem[13]{Sch11} I. Schur, \textit{\"{U}ber die Darstellung der symmetrischen und der alternierenden Gruppe durch gebrochene lineare Substitutionen}, J. Reine Angew. Math. {\bf 139}, 155–250, 1911.
\bibitem[14]{Ste89} J. R. Stembridge, \textit{Shifted tableaux and the projective representations of symmetric groups}, Adv.
Math. {\bf 74}(1), 87–134, 1989.
\end{thebibliography}
\end{document}